\numberwithin{equation}{section}
\newtheorem{theorem}{Theorem}
\newtheorem{definition}[theorem]{Definition}
\newtheorem{example}[theorem]{Example}
\newtheorem{proposition}[theorem]{Proposition}
\newtheorem{remark}[theorem]{Remark}
\begin{document}
\title[Generalization of the Banach contraction principle]{Generalization of the Banach contraction principle}

\author[M. Berzig]{Maher Berzig}

\address{Maher Berzig,  \newline
\indent Tunis College of Sciences and Techniques,\newline
\indent 5 Avenue Taha Hussein, Tunis University,  Tunisia}
\email{maher.berzig@gmail.com}


\subjclass[2000]{46T99, 47H10, 54H25, 54E50}

\keywords{Fixed point; Shifting distance function; Complete metric space}

\begin{abstract}
We introduce the concept of shifting distance functions, and we establish a new fixed point theorem which generalizes the Banach contraction principle. An  example is provided to illustrate  our result. 
 \end{abstract}

\maketitle

\section{Introduction and Preliminaries}

The Banach contraction principle \cite{B} provides the most  simple and efficient tools in fixed point theory. This principle is used to studying the existence and uniqueness of solution for a wide class of  linear and nonlinear functional equations arising in pure and applied mathematics.  
\begin{theorem}[see  \cite{B}]
Let $(X, d)$ be a complete metric space and let $T : X\to X$ be a self-mapping. Suppose that there exists a constant $k\in[0,1)$  such that
$$
d(Tx,Ty) \le kd(x,y)\quad \text{ for all } x,y \in X.
$$
Then $T$ has a unique fixed point, that is, there exists $x^*\in X$ such that $Tx^*=x^*$.
\end{theorem}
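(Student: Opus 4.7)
The plan is to prove this by the classical Picard iteration argument. Starting from an arbitrary point $x_0 \in X$, I would define the sequence $x_{n+1} = T x_n$ for $n \ge 0$ and then show that $(x_n)$ is Cauchy, hence convergent in $X$, with limit a fixed point of $T$.

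For the Cauchy property, I would first iterate the contraction inequality to obtain $d(x_{n+1}, x_n) \le k^n d(x_1, x_0)$ by induction on $n$. Next, for any $m > n$, a triangle-inequality telescoping followed by a geometric-series bound gives
\[
d(x_m, x_n) \le \sum_{j=n}^{m-1} d(x_{j+1}, x_j) \le \frac{k^n}{1-k}\, d(x_1, x_0).
\]
Since $k \in [0,1)$, the right-hand side tends to $0$ as $n \to \infty$, so $(x_n)$ is Cauchy, and by completeness of $(X,d)$ it converges to some $x^* \in X$.

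To verify that $x^*$ is a fixed point, I would use that any contraction is Lipschitz and hence continuous, so $T x_n \to T x^*$; combined with $T x_n = x_{n+1} \to x^*$, this forces $T x^* = x^*$. For uniqueness, if $y^*$ were another fixed point then $d(x^*, y^*) = d(Tx^*, T y^*) \le k\, d(x^*, y^*)$, and since $k < 1$ this is only possible when $d(x^*, y^*) = 0$.

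I do not anticipate a substantive obstacle, as each step is essentially forced by the hypotheses. The most delicate point, and the one where completeness becomes indispensable, is the geometric-series estimate that upgrades the ``successive terms close'' property $d(x_{n+1}, x_n) \le k^n d(x_1, x_0)$ to the Cauchy condition; this is precisely the mechanism that any genuine generalization (such as the shifting-distance framework announced in the abstract) must be careful to preserve.
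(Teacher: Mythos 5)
Your proof is correct and is the classical Picard iteration argument; note, though, that the paper itself gives no proof of this statement (it is quoted from Banach's 1922 paper), so the only in-paper argument to compare against is the proof of its main result, Theorem \ref{TH1}, of which the contraction principle is a special case (take $\psi(t)=t$ and $\phi(t)=kt$; one checks easily that this is a pair of shifting distance functions). The two routes to Cauchyness are genuinely different. You exploit the quantitative decay $d(x_{n+1},x_n)\le k^n d(x_1,x_0)$ and a telescoping geometric-series bound, which gives an explicit rate of convergence and even an a priori error estimate. The paper's framework has no such rate available --- condition (ii) of Definition \ref{shift} only yields the qualitative fact $d(x_{n+1},x_n)\to 0$ --- so Step 2 of the proof of Theorem \ref{TH1} must instead argue by contradiction, extracting subsequences $\{x_{m(k)}\}$, $\{x_{n(k)}\}$ with $d(x_{m(k)},x_{n(k)})\ge\varepsilon$ and $d(x_{m(k)},x_{n(k)-1})<\varepsilon$ and deriving $\varepsilon=0$. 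Your closing remark correctly identifies this as the step a generalization must preserve, but you slightly misplace where completeness enters: the geometric-series estimate (and likewise the paper's subsequence argument) establishes the Cauchy property using only the contraction hypothesis; completeness is invoked afterwards, solely to convert ``Cauchy'' into ``convergent.'' This is a remark about your commentary, not a gap in the proof, which is sound as written.
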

The above theorem has been generalized and extended in different directions see for example  \cite{AGB,B,Be,BK,BS,BW,C,DC,HR,K,KSS,MK,N,R,RR}, and references therein. Khan et al. introduced  the concept of  altering distance functions as follows. 
\begin{definition}[see \cite{KSS}]
A functions $\psi : [0,\infty)\to [0,\infty)$ is called an altering distance function if the following properties are satisfied:
\begin{enumerate}
  \item $\psi(t)=0$ if and only if $t=0$.
  \item $\psi$ is  continuous and nondecreasing.
  \label{D1}
\end{enumerate}
\end{definition}
We state the result of Khan et al.  in the following.
\begin{theorem}[see \cite{KSS}]
Let $(X, d)$ be a complete metric space and let $T : X\to X$ be a self-mapping. Suppose that there exist an altering distance functions $\psi$ and a constant $c\in[0,1)$  such that
$$
\psi(d(Tx,Ty)) \le c\psi(d(x,y))\quad \text{ for all } x,y \in X.
$$
Then $T$ has a unique fixed point.
\label{Kh}
\end{theorem}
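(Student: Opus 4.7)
The plan is to adapt the Picard iteration argument used in the Banach contraction principle, with the contraction now acting on $\psi(d(\cdot,\cdot))$ rather than on $d(\cdot,\cdot)$ directly. I would fix an arbitrary $x_0\in X$, set $x_{n+1}=Tx_n$, and write $d_n=d(x_n,x_{n+1})$. Applying the hypothesis and iterating yields
\[
\psi(d_n)\le c\,\psi(d_{n-1})\le\cdots\le c^n\psi(d_0),
\]
so $\psi(d_n)\to 0$. To pass from $\psi(d_n)\to 0$ to $d_n\to 0$, I would argue by contradiction: if some subsequence $d_{n_k}$ stayed above $\varepsilon>0$, the monotonicity of $\psi$ would give $\psi(d_{n_k})\ge\psi(\varepsilon)>0$ (using property (1) of an altering distance function), contradicting the limit.

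The principal obstacle is showing that $(x_n)$ is Cauchy. In the usual Banach proof one telescopes $d(x_n,x_m)\le\sum d_i$ and uses a geometric estimate, but here we only control $\psi(d_n)$, not $d_n$ itself, so that route is not directly available. Instead I would argue by contradiction: assuming $(x_n)$ is not Cauchy, I would extract $\varepsilon>0$ and indices $n_k<m_k$ with $d(x_{n_k},x_{m_k})\ge\varepsilon$ while $d(x_{n_k},x_{m_k-1})<\varepsilon$, following the standard Khan--Swaleh--Sessa trick. The triangle inequality combined with $d_n\to 0$ then forces both $d(x_{n_k},x_{m_k})\to\varepsilon$ and $d(x_{n_k+1},x_{m_k+1})\to\varepsilon$. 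Using continuity of $\psi$ together with the contractive hypothesis
\[
\psi\bigl(d(x_{n_k+1},x_{m_k+1})\bigr)\le c\,\psi\bigl(d(x_{n_k},x_{m_k})\bigr),
\]
and letting $k\to\infty$, I obtain $\psi(\varepsilon)\le c\,\psi(\varepsilon)$, hence $\psi(\varepsilon)=0$, which by property (1) gives $\varepsilon=0$, a contradiction.

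Once Cauchyness is established, completeness of $X$ produces a limit $x^*$. To check $Tx^*=x^*$, I would apply the contractive inequality with $y=x_n$: $\psi(d(Tx^*,x_{n+1}))\le c\,\psi(d(x^*,x_n))$. Since $d(x^*,x_n)\to 0$ and $\psi$ is continuous with $\psi(0)=0$, the right side tends to $0$, so $\psi(d(Tx^*,x_{n+1}))\to 0$; repeating the monotonicity argument used in step one forces $d(Tx^*,x_{n+1})\to 0$, whence $Tx^*=x^*$. Uniqueness is immediate: if $x^*$ and $y^*$ are both fixed points, then $\psi(d(x^*,y^*))=\psi(d(Tx^*,Ty^*))\le c\,\psi(d(x^*,y^*))$, giving $\psi(d(x^*,y^*))=0$ and hence $x^*=y^*$.
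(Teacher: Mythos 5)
The paper cites this theorem from Khan, Swaleh and Sessa without giving a proof, so there is no in-paper argument to compare against; your proof is correct and is essentially the standard one. It also follows exactly the four-step scheme the paper uses to prove its generalization, Theorem \ref{TH1} (decay of $d(x_n,x_{n+1})$, Cauchyness by contradiction via the $\varepsilon$-subsequence trick, passage to the limit to identify the fixed point, and uniqueness), so nothing further is needed.
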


On the other hand, Alber and Guerre-Delabriere \cite{AGB} have introduced the concept of weakly contractive maps and established a generalization of the contraction principle in Hilbert spaces.  Next, Rhodes extended and generalized this concept  to be valid even in complete metric spaces as follows.  

\begin{theorem} [see \cite{R}]
Let $(X, d)$ be a complete metric space and let $T : X\to X$ be a self-mapping. Suppose that there exist an altering distance functions $\psi$  such that
$$
\psi(d(Tx,Ty)) \le d(x,y)- \psi(d(x,y))\quad \text{ for all } x,y \in X.
$$
Then $T$ has a unique fixed point.
\label{Ro}
\end{theorem}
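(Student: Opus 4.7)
The plan is to construct a fixed point of $T$ via the Picard iteration and then show uniqueness. Fix any $x_0\in X$ and define $x_{n+1}=Tx_n$. I will aim to prove that $(x_n)$ is a Cauchy sequence; by completeness this produces a limit $x^*\in X$, and a passage to the limit in the contractive inequality will identify $x^*$ as the unique fixed point of $T$.

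First, I would control the consecutive distances $d_n:=d(x_n,x_{n+1})$. Substituting $x=x_{n-1}$ and $y=x_n$ into the hypothesis gives
\[
\psi(d_n)\le d_{n-1}-\psi(d_{n-1}),
\]
so $\psi(d_n)+\psi(d_{n-1})\le d_{n-1}$. From this, using the monotonicity of $\psi$ and the property $\psi(t)=0$ iff $t=0$, I would extract that the sequence $(d_n)$ is bounded and, after a case split on whether $d_n\le d_{n-1}$ or $d_n>d_{n-1}$, that $\psi(d_n)\to 0$; continuity of $\psi$ then upgrades this to $d_n\to 0$.

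The main obstacle is showing that $(x_n)$ is Cauchy. I would argue by contradiction in the now-standard way for such contractive inequalities. If $(x_n)$ is not Cauchy, there exist $\varepsilon>0$ and subsequences $(m_k),(n_k)$ with $n_k<m_k$, $d(x_{m_k},x_{n_k})\ge\varepsilon$, and $d(x_{m_k-1},x_{n_k})<\varepsilon$. Triangle-inequality bookkeeping, combined with $d_n\to 0$, yields that both $d(x_{m_k},x_{n_k})$ and $d(x_{m_k-1},x_{n_k-1})$ converge to $\varepsilon$. Applying the hypothesis with $x=x_{m_k-1}$, $y=x_{n_k-1}$ and letting $k\to\infty$, with the continuity of $\psi$, produces the bound $\psi(\varepsilon)\le\varepsilon-\psi(\varepsilon)$; a careful bracketing of the subsequences should sharpen this to $\psi(\varepsilon)\le 0$, which forces $\varepsilon=0$ by the altering-distance properties and gives the desired contradiction. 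I expect this Cauchy step to be the most delicate, since the presence of $\psi$ on the left of $d(Tx,Ty)$ weakens the direct monotonicity arguments available in the classical Banach setting.

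Having obtained $x_n\to x^*$, I would substitute $x=x_n$, $y=x^*$ into the hypothesis and pass to the limit, using continuity of $\psi$ and of the metric, to conclude $\psi(d(x^*,Tx^*))\le 0$, hence $Tx^*=x^*$. For uniqueness, assuming $Tx^*=x^*$ and $Ty^*=y^*$, plugging these into the inequality and again invoking the altering-distance property of $\psi$ forces $d(x^*,y^*)=0$, completing the proof.
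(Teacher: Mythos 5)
You should first note that the paper gives no proof of this statement: it is quoted from Rhoades \cite{R} as background material, so there is no internal argument to compare yours against. More importantly, the statement as printed here is not actually provable, because the inequality $\psi(d(Tx,Ty))\le d(x,y)-\psi(d(x,y))$, with $\psi$ applied on the left-hand side, is far weaker than a contraction. Take $X={\mathbb R}$ with the usual metric, $Tx=x+1$, and $\psi(t)=t/100$, which is a legitimate altering distance function. The hypothesis then reads $\frac{1}{100}|x-y|\le \frac{99}{100}|x-y|$, which holds for all $x,y$, yet $T$ has no fixed point; taking $T$ to be the identity with the same $\psi$ shows that uniqueness fails as well. (Rhoades' actual theorem has no $\psi$ on the left: the hypothesis is $d(Tx,Ty)\le d(x,y)-\psi(d(x,y))$.) So your proof must break down somewhere, and it does so at every point where you try to pass from an inequality involving $\psi$ back to one involving distances.

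Concretely: in your first step, $\psi(d_n)+\psi(d_{n-1})\le d_{n-1}$ does not imply $\psi(d_n)\to 0$; in the example above $d_n\equiv 1$ and $\psi(d_n)\equiv 1/100$, and the inequality $2/100\le 1$ holds for every $n$. No case split on whether $d_n\le d_{n-1}$ can rescue this, because the hypothesis never forces $d_n\le d_{n-1}$ in the first place. In the Cauchy step, the limit inequality you actually obtain is $2\psi(\varepsilon)\le\varepsilon$, which is not a contradiction for any $\varepsilon>0$ when $\psi$ grows sublinearly; there is no bracketing of subsequences that sharpens it to $\psi(\varepsilon)\le 0$. The uniqueness step has the same defect: $2\psi(d(z_1,z_2))\le d(z_1,z_2)$ does not force $d(z_1,z_2)=0$. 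If you instead prove the corrected statement $d(Tx,Ty)\le d(x,y)-\psi(d(x,y))$, your outline becomes the standard argument and matches the skeleton the paper uses for its own Theorem \ref{TH1}: there $d_n\le d_{n-1}-\psi(d_{n-1})$ gives monotonicity of $(d_n)$ and $\psi(r)\le 0$ at the limit $r$, the Cauchy step yields $\varepsilon\le\varepsilon-\psi(\varepsilon)$, hence $\psi(\varepsilon)\le 0$ and $\varepsilon=0$, and uniqueness follows from $d(z_1,z_2)\le d(z_1,z_2)-\psi(d(z_1,z_2))$.
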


However, Dutta and Choudhury  proved  a generalized version of Theorem \ref{Ro} as follows.
\begin{theorem}[see \cite{DC}]
Let $(X, d)$ be a complete metric space and let $T : X\to X$ be a self-mapping. Suppose that there exist two altering distance functions $\psi$ and $\varphi$ such that
\begin{equation}\label{cnt}
\psi(d(Tx,Ty)) \le \psi(d(x,y))-\varphi(d(x,y))\quad \text{ for all } x,y \in X.
\end{equation}
Then $T$ has a unique fixed point.
\label{DC}
\end{theorem}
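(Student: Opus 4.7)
The plan is to follow the Picard iteration argument, adapted from the classical Banach proof, with the shift from a multiplicative contraction to the $(\psi,\varphi)$-controlled inequality \eqref{cnt}. Fix any $x_0\in X$ and set $x_{n+1}=Tx_n$, $d_n=d(x_n,x_{n+1})$.

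First I would show the sequence $\{d_n\}$ is nonincreasing and tends to $0$. Applying \eqref{cnt} to the pair $(x_{n-1},x_n)$ gives $\psi(d_n)\le \psi(d_{n-1})-\varphi(d_{n-1})\le \psi(d_{n-1})$. Because $\psi$ is nondecreasing and vanishes only at $0$, this forces $d_n\le d_{n-1}$ (if $d_n>d_{n-1}$, nondecreasingness of $\psi$ combined with the inequality yields $\varphi(d_{n-1})=0$, hence $d_{n-1}=0$, a contradiction). So $d_n\downarrow r\ge 0$; passing to the limit and using continuity of $\psi$ and $\varphi$ gives $\psi(r)\le \psi(r)-\varphi(r)$, so $\varphi(r)=0$ and $r=0$.

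The main obstacle is proving $\{x_n\}$ is Cauchy, since the hypothesis provides no uniform Lipschitz constant. I would argue by contradiction: assume there exist $\varepsilon>0$ and subsequences $\{m_k\},\{n_k\}$ with $m_k>n_k\ge k$, $d(x_{m_k},x_{n_k})\ge \varepsilon$, and $m_k$ chosen minimal with this property. Standard triangle-inequality estimates using $d_n\to 0$ then pin down the limits
$$\lim_{k\to\infty} d(x_{m_k},x_{n_k})=\varepsilon,\qquad \lim_{k\to\infty} d(x_{m_k-1},x_{n_k-1})=\varepsilon.$$
Applying \eqref{cnt} with $x=x_{m_k-1}$, $y=x_{n_k-1}$ and taking $k\to\infty$, the continuity of $\psi$ and $\varphi$ yields $\psi(\varepsilon)\le \psi(\varepsilon)-\varphi(\varepsilon)$, so $\varphi(\varepsilon)=0$ and thus $\varepsilon=0$, a contradiction.

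Completeness then provides a limit $x^*$ with $x_n\to x^*$. Applying \eqref{cnt} with $x=x_n$, $y=x^*$ and using continuity of $\psi$ gives $\psi(d(x^*,Tx^*))\le \psi(0)-\varphi(0)=0$, hence $Tx^*=x^*$. For uniqueness, if $Ty^*=y^*$ as well, \eqref{cnt} applied to the pair $(x^*,y^*)$ forces $\varphi(d(x^*,y^*))=0$, so $d(x^*,y^*)=0$. The delicate point throughout is exploiting the properties of the altering distance functions (continuity and the zero-characterization) at each limit passage, which is precisely what replaces the quantitative contraction factor $k$ of the Banach principle.
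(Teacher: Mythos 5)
Your proof is correct, but it takes a different route from the paper. The paper states this theorem as a quoted result of Dutta and Choudhury and never proves it directly: its ``proof'' is the Proposition, which reduces the statement to the more general Theorem \ref{TH1} by setting $\phi=\psi-\varphi$ and checking that $(\psi,\phi)$ is a pair of shifting distance functions. You instead give a self-contained direct proof by Picard iteration. The skeleton is the same as the paper's proof of Theorem \ref{TH1} (monotone decrease of $d_n$ to $0$, Cauchy by contradiction via the $\varepsilon$-subsequence device, passage to the limit for existence, then uniqueness), and your limit passages using continuity and the zero-characterization of $\psi,\varphi$ are precisely what the paper abstracts into condition (ii) of a shifting-distance pair, while your monotonicity step plays the role of condition (i). What the paper's route buys is generality (Theorem \ref{TH1} covers cases where no decomposition $\phi=\psi-\varphi$ with altering distance functions exists, as its Example shows); what your route buys is a proof readable without the new definition. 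Two small points to tighten: in Step 1, after deducing $d_{n-1}=0$ you should add that this forces $d_n=d(Tx_{n-1},Tx_n)=0$ as well, contradicting $d_n>d_{n-1}=0$ (the equality $d_{n-1}=0$ alone is not yet a contradiction); and in Step 3 you should justify that $\psi(d(x^*,Tx^*))\le 0$ together with $\psi\ge 0$ and $\psi(t)=0\iff t=0$ gives $Tx^*=x^*$, which you implicitly use.
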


In this paper, we obtain a new generalization of the Banach contraction principle. Our theorem generalizes also the results of Dutta and Choudhury \cite{DC}. An example is given to illustrate our result.

\section{Main result}

Let us  introduce some definition before we state our main result.
\begin{definition}
Let $\psi,\phi:[0,+\infty)\to {\mathbb R}$ be two functions. The pair of functions  $(\psi,\phi)$ is said to be a pair of shifting distance functions  if the following conditions hold:
\begin{itemize}
\item[\rm(i)]  for $u, v\in [0,+\infty)$ if $\psi(u)\le\phi(v) $, then $u\le v ;$
\item[\rm(ii)]  for $\{u_n\}, \{v_n\}\subset [0,+\infty)$ with $\lim\limits_{n\to\infty}u_n=\lim\limits_{n\to\infty}v_n=w$,   if $\psi(u_n)\le\phi(v_n)$  $\forall n\ge 0$, then $w=0.$
\label{shift}
\end{itemize}
\end{definition}

\begin{example}
The conditions (i) and (ii) of the above definition are fulfilled for the functions $\psi,\phi:[0,+\infty)\to {\mathbb R}$ defined by $\psi(t)=\ln ((1+2t)/2)$ and $\phi(t)=\ln ((1+t)/2)$, $\forall t\in [0,+\infty)$.
\end{example}

Now, we are ready to state the main results.

\begin{theorem}
Let $(X,d)$ be a complete metric space and $T:X\rightarrow X$ be a mappings. Suppose that there exist a pair of shifting distance functions  $(\psi,\phi)$ such that
\begin{equation}
\psi(d(Tx,Ty))\leq \phi(d(x,y))\quad\text{for all}\quad x,y\in X.
\label{ctr}
\end{equation}
Then $T$ has a unique fixed  point in $X$.
\label{TH1}
\end{theorem}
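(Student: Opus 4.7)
The plan is to mimic the classical Picard iteration argument used for the Dutta–Choudhury theorem, but with property (i) of shifting distance functions replacing monotonicity of $\psi$ and property (ii) replacing the $\psi$–$\varphi$ limit argument. Fix $x_0\in X$ and set $x_{n+1}=Tx_n$, $d_n=d(x_n,x_{n+1})$. Applying \eqref{ctr} with $x=x_{n-1}$, $y=x_n$ gives $\psi(d_n)\le\phi(d_{n-1})$, so property (i) yields $d_n\le d_{n-1}$. Hence $\{d_n\}$ decreases to some limit $d\ge 0$, and applying property (ii) to $u_n=d_n$, $v_n=d_{n-1}$ forces $d=0$.

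The main obstacle is the Cauchy step, which I would handle by the standard double-subsequence device. Assuming $\{x_n\}$ is not Cauchy, choose $\varepsilon>0$ and, for each $k$, indices $m_k>n_k\ge k$ with $d(x_{m_k},x_{n_k})\ge\varepsilon$ and $m_k$ minimal with this property, so that $d(x_{m_k-1},x_{n_k})<\varepsilon$. Triangle-inequality manipulations together with $d_n\to 0$ show that both $d(x_{m_k},x_{n_k})$ and $d(x_{m_k-1},x_{n_k-1})$ converge to $\varepsilon$. Since \eqref{ctr} gives
\[
\psi\bigl(d(x_{m_k},x_{n_k})\bigr)=\psi\bigl(d(Tx_{m_k-1},Tx_{n_k-1})\bigr)\le\phi\bigl(d(x_{m_k-1},x_{n_k-1})\bigr),
\]
property (ii) forces $\varepsilon=0$, a contradiction. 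Therefore $\{x_n\}$ is Cauchy and, by completeness, converges to some $x^{*}\in X$.

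To show $Tx^{*}=x^{*}$, apply (i) to the inequality $\psi(d(Tx^{*},Tx_n))\le\phi(d(x^{*},x_n))$ to get $d(Tx^{*},x_{n+1})\le d(x^{*},x_n)\to 0$, so $x_{n+1}\to Tx^{*}$; combined with $x_{n+1}\to x^{*}$ this yields $Tx^{*}=x^{*}$. For uniqueness, if $Ty^{*}=y^{*}$ as well, set $w=d(x^{*},y^{*})$; then \eqref{ctr} gives $\psi(w)\le\phi(w)$, and applying property (ii) to the constant sequences $u_n=v_n=w$ forces $w=0$. The only nonroutine piece is the Cauchy argument, and the novelty lies in observing that conditions (i) and (ii) together play exactly the roles of monotonicity and the contractive deficit in the Dutta–Choudhury framework.
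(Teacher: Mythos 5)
Your proposal is correct and follows essentially the same four-step argument as the paper: Picard iteration with property (i) giving monotonicity and property (ii) killing the limit, the standard double-subsequence contradiction for the Cauchy step, property (i) for convergence to a fixed point, and property (ii) applied to constant sequences for uniqueness. The only differences are notational (which index is taken minimal in the Cauchy step).
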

\begin{proof}
Let $x_0\in X$, we define the sequence $\{x_n\}$ in $X$ by
$$
x_{n+1}=Tx_n,\quad n\ge0.
$$
The proof is divided into 4 steps. \\
{\it Step 1.} we shall prove that $\lim_{n \to \infty}d(x_{n+1},x_{n})= 0$.\\
We apply inequality (\ref{ctr}) for $x=x_{n-1}$ and $y=x_n$, we obtain
\begin{equation*}
\psi(u_{n})\le \phi(u_{n-1})\quad\text{for all}\quad n\ge 1.
\end{equation*}
where $u_n=d(x_{n+1},x_{n})$, which implies, by (i) from Definition (\ref{shift}), that   $\{u_n\}$ is a  decreasing sequence. Therefore, there exists $ r\ge0$ such that $\lim\limits_{n \to \infty}u_n= r$.
Using {(ii)} from Definition \ref{shift}, we obtain that $r=0$. Then,
\begin{equation}\label{lim}
\lim_{n \to \infty}d(x_{n+1},x_{n})= 0.
\end{equation}
{\it Step 2.} we shall prove that  $\{x_n\}$ is a Cauchy sequence.\\
Suppose that $\{x_n\}$ is not a Cauchy sequence. Then there exists $\varepsilon > 0$ for which we can find subsequences $\{x_{m(k)}\}$ and $\{x_{n(k)}\}$ of $\{x_n\}$ with $n(k) > m(k) > k$ such that
\begin{equation*}
  d(x_{m(k)}, x_{n(k)}) \ge \varepsilon\quad\text{and}\quad  d(x_{m(k)}, x_{n(k)-1}) < \varepsilon.
\end{equation*}
Then, we have
\begin{equation*}
\varepsilon\le  d(x_{m(k)}, x_{n(k)})\le d(x_{m(k)}, x_{n(k)-1})+d(x_{n(k)-1}, x_{n(k)}) < \varepsilon+d(x_{n(k)-1}, x_{n(k)}) .
\end{equation*}
Let $k\to \infty$ in the above inequality and using (\ref{lim}), we get
\begin{equation}\label{E4}
\lim_{k\to\infty}  d(x_{m(k)}, x_{n(k)}) =\varepsilon.
\end{equation}
Next, we have
\begin{align*}
&|d(x_{m(k)}, x_{n(k)}) - d(x_{m(k)-1}, x_{n(k)}) | \le d(x_{m(k)}, x_{m(k)-1}),\\
&|d(x_{m(k)-1}, x_{n(k)}) - d(x_{m(k)-1}, x_{n(k)-1}) | \le d(x_{n(k)}, x_{n(k)-1}).
\end{align*}
Let  $k\to\infty$ in the above inequalities and using (\ref{lim}) and (\ref{E4}), we get
\begin{equation}\label{E6}
\lim_{k\to\infty}  d(x_{m(k)-1}, x_{n(k)-1}) =\varepsilon.
\end{equation}
Next, we apply inequality (\ref{ctr}) for $x = x_{m(k)-1}$ and $y = x_{n(k)-1}$, we obtain
\begin{equation}\label{E7}
\psi(a_{k})\le \phi(b_{k}),
\end{equation}
where $a_{k}=d(x_{m(k)},x_{n(k)})$ and $b_{k}=d(x_{m(k)-1},x_{n(k)-1})$.
Hence,  by using (\ref{E4}), (\ref{E6}) and {(ii)} from Definition \ref{shift}, we obtain from (\ref{E7}) that  $\varepsilon=0$, which is a contradiction. This shows that $\{x_n\}$ is a Cauchy sequence. Furthermore, since $X$ is a complete metric space, then  $\{x_n\}$ converges to some $z\in X$.\\
{\it Step 3.} we shall prove that $z$ is a fixed point of $T$.\\
We set $x = x_{n}$ and $y = z$ in (\ref{ctr}), we obtain
\begin{equation*}
\psi(d(x_{n+1},Tz))\le \phi(d(x_n,z)),
\end{equation*}
which implies by {(i)} from Definition \ref{shift} that
\begin{equation*}
d(x_{n+1},Tz)\le d(x_n,z),
\end{equation*}
Letting $n\to\infty$ in the above inequality, we obtain that $d(z,Tz)=0$, that is, $z=Tz$.\\
{\it Step 4.} we shall prove the uniqueness of the fixed point.\\
Suppose that there exist two fixed points $z_1$ and $z_2$ in  $X$, that is, $Tz_1=z_1$ and $Tz_2=z_2$.\\
We set $x = z_1$ and $y = z_2$ in  (\ref{ctr}), we get
$$
\psi(d(z_1, z_2))=\psi(d(Tz_1,Tz_2))\le \phi(d(z_1,z_2)),
$$
which implies, by {(ii)} from Definition (\ref{shift}), that $d(z_1,z_2)=0$, that is, $z_1=z_2$.
\end{proof}

\begin{proposition}
Theorem \ref{DC} is a particular case of Theorem \ref{TH1}.
\end{proposition}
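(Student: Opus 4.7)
The plan is to show that the hypotheses of Theorem \ref{DC} imply the hypotheses of Theorem \ref{TH1} by an explicit construction: given two altering distance functions $\psi,\varphi$ satisfying \eqref{cnt}, I define $\phi:[0,+\infty)\to\mathbb{R}$ by $\phi(t)=\psi(t)-\varphi(t)$. The contraction hypothesis \eqref{cnt} then reads exactly $\psi(d(Tx,Ty))\le\phi(d(x,y))$, matching \eqref{ctr}. So the entire task reduces to checking that the pair $(\psi,\phi)$ verifies conditions (i) and (ii) of Definition \ref{shift}.

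For condition (i), suppose $\psi(u)\le\phi(v)=\psi(v)-\varphi(v)$, and argue by contradiction: if $u>v$, then monotonicity of $\psi$ gives $\psi(v)\le\psi(u)\le\psi(v)-\varphi(v)$, so $\varphi(v)\le 0$. Since $\varphi$ is an altering distance function it takes nonnegative values with $\varphi(t)=0\iff t=0$, forcing $v=0$. Then $\psi(u)\le\psi(0)-\varphi(0)=0$, and the same property applied to $\psi$ gives $u=0$, contradicting $u>v=0$. Hence $u\le v$.

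For condition (ii), suppose $u_n,v_n\to w$ and $\psi(u_n)\le\psi(v_n)-\varphi(v_n)$ for every $n$. Passing to the limit and using continuity of both $\psi$ and $\varphi$ (part of being altering distance functions) yields $\psi(w)\le\psi(w)-\varphi(w)$, hence $\varphi(w)\le 0$. Nonnegativity of $\varphi$ together with $\varphi(t)=0\iff t=0$ then forces $w=0$.

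I do not expect any real obstacle here; the only subtlety is in (i), where the implication $\psi(u)\le\psi(v)\Rightarrow u\le v$ is \emph{not} automatic for nondecreasing $\psi$, so one must genuinely exploit the positive quantity $\varphi(v)$ and the characterization of the zeros of $\psi$ and $\varphi$ to push the contradiction through. Once the pair $(\psi,\phi)$ is verified to be a pair of shifting distance functions, Theorem \ref{TH1} applies and yields the conclusion of Theorem \ref{DC}, completing the proof that the latter is a special case of the former.
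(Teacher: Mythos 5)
Your construction is the same as the paper's: set $\phi=\psi-\varphi$, observe that \eqref{cnt} becomes \eqref{ctr}, and verify that $(\psi,\phi)$ is a pair of shifting distance functions. Your treatment of condition (ii) coincides with the paper's. Where you differ is condition (i), and your version is in fact the more careful one: the paper deduces $u\le v$ from $\psi(u)\le\psi(v)$ ``since $\psi$ is nondecreasing,'' but that implication fails for a merely nondecreasing $\psi$ (e.g.\ $\psi(t)=\min(t,1)$ is a legitimate altering distance function with $\psi(3)\le\psi(2)$). Your contradiction argument --- if $u>v$ then $\varphi(v)\le 0$, hence $v=0$, hence $\psi(u)\le 0$, hence $u=0$, contradiction --- genuinely uses the strict positivity of $\varphi$ off zero and the characterization of the zeros of $\psi$, and it closes this gap. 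So your proof is correct and follows the paper's route, while repairing the one step of the paper's verification that does not hold as literally stated.
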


\begin{proof}
Suppose that all conditions of Theorem \ref{DC} are satisfied, then we shall prove that all conditions of  Theorem \ref{TH1} are satisfied too. To this end, suppose there exist two altering distance functions $\psi$ and $\varphi$ such that the contraction (\ref{cnt}) holds. We pose that $\phi(t)=(\psi-\varphi)(t)$, then we have to prove that the pair of functions $(\psi,\phi)$ is a pair of shifting functions. Let $u,v\in[0,+\infty)$, if $\psi(u)\le \phi(v)=(\psi-\varphi)(v)\le\psi(v)$, then $u\le v$ since $\psi$ is a nondecreasing function. This proves that the condition {(i)} of Definition \ref{shift} \ref{TH1} is satisfied. Next,  let  two convergent sequences $\{u_n\}$ and $\{u_n\}$  such that $\lim\limits_{n\to\infty}u_n=\lim\limits_{n\to\infty}v_n=w$, and suppose that $\psi(u_n)\le\phi(v_n)$ for all $n$. Hence, by letting $n\to\infty$ in $\psi(u_n)\le \phi(v_n)=(\psi-\varphi)(v_n)$ and using the continuity of $\psi$ and $\varphi$, we get $\psi(w)\le\psi(w)-\varphi(w)$, which implies that $\varphi(w)=0$ so $w=0$.  This proves that the condition {(ii)}  of Definition \ref{shift} is satisfied.  Finally, the contraction (\ref{ctr}) follows immediately from (\ref{cnt}). Consequently, the  Theorem \ref{DC} is a particular case of Theorem \ref{TH1}.
\end{proof}

\begin{example}
Let $X=[0,1]\cup\{2,3,4,\dots\}$ and define the metric on $X$ by
\begin{align*}
d(x,y)=\left\{
  \begin{array}{ll}
    |x-y|, & \text{ if } x,y\in [0,1] \text{ and } x\ne y; \\
       x+y, & \text{ if } (x,y)\not\in [0,1]\times[0,1] \text{ and } x\ne y; \\
             0 & \text{ if }  x= y.
  \end{array}
\right.
\end{align*}
We know that $(X,d)$ is a complete metric space (see \cite{BW}).\\
Define the functions $\psi$ and $\phi$ as follows
\begin{align*}
\psi(x)=\left\{
  \begin{array}{ll}
    \ln(\dfrac{1}{12}+\dfrac{5}{12}x), & \text{ if } x\in [0,1]; \\[2mm]
    \ln(\dfrac{1}{12}+\dfrac{4}{12}x), & \text{ if } x >1. 
  \end{array}
\right.
\end{align*}
and
\begin{align*}
\phi(x)=\left\{
  \begin{array}{ll}
    \ln(\dfrac{1}{12}+\dfrac{3}{12}x), & \text{ if } x\in [0,1]; \\[2mm]
     \ln(\dfrac{1}{12}+\dfrac{2}{12}x), & \text{ if } x >1.
  \end{array}
\right.
\end{align*}
Let $T:X\to X$ defined by 
\begin{align}\label{T}
Tx=\left\{
  \begin{array}{ll}
   \displaystyle\frac{1}{5} x, & \text{ if } x\in [0,1[; \\[2mm]
   \displaystyle\frac{3}{125} , & \text{ if } x \in\{1,2,3,\dots\}.
  \end{array}
\right.
\end{align}
Without loss of generality, we assume that $x > y$ and discuss the following cases.\\
\noindent\textbullet ~{\it Case 1,  $x\in[0,1]$}:
\begin{align*}
 \psi(d(Tx,Ty)) & = \ln(\frac{1}{12}+ \frac{5}{12} d(Tx,Ty))  \\
                             & = \ln(\frac{1}{12}+ \frac{5}{12}|Tx-Ty|)  \\
                             & = \ln(\frac{1}{12}+\frac{1}{12}|x-y|)  \\
                             & \le  \phi(d(x,y))           
\end{align*}
\noindent\textbullet~{\it Case 2, $x\in \{2,3,\dots\}$}:\\
\indent If $y\in [0, 1[$, then
\begin{align*}
  \psi(d(Tx,Ty)) & = \ln(\frac{1}{12}+ \frac{5}{12}d(Tx,Ty))  \\
                             & = \ln(\frac{1}{12}+ \frac{5}{12}|Tx-Ty|)  \\
                             & \le  \ln(\frac{1}{12}+ \frac{5}{12}(\frac{3}{125}+\frac{1}{5}y))  \\
                             & \le   \ln(\frac{1}{12}+\frac{1}{100}+\frac{1}{12}y) \\
                             & \le  \phi(d(x,y))\quad(\text{ since }\frac{1}{100}+\frac{1}{12}y\le \frac{1}{12}(x+y) )
\end{align*}
\indent  If $y\in \{1,2,3,\dots\}$, then
\begin{align*}
  \psi(d(Tx,Ty)) & \le \ln(\frac{1}{12}+ \frac{5}{12}d(Tx,Ty)) \\
                             & = \ln(\frac{1}{12})  \\
                             & \le  \phi(d(x,y))
\end{align*}

\end{example}

\begin{remark}
We cannot apply Theorem \ref{DC} to prove the existence and uniqueness of fixed point of the mappings $T$ defined in (\ref{T}), however we can apply  Theorem \ref{TH1}.
\end{remark}


\begin{thebibliography}{99}

 \bibitem{AGB} 
 {\sc Y.I. Alber,   S. Guerre-Delabriere}, {\em Principle of weakly contractive maps in Hilbert spaces}, New Results in Operator Theory and Its Applications, Springer, 1997, 7--22.

\bibitem{B} 
{\sc S. Banach}, {\em Sur les op{\'e}rations dans les ensembles abstraits et leur application aux {\'e}quations int{\'e}grales},{Fund. Math} {\bf 3}(1922), 133--181.

\bibitem{BK}
{\sc M. Berzig,  E. Karap{\i}nar}, {\em Fixed point results for  $(\alpha\psi,\beta\varphi)$-contractive mappings for a generalized altering distance},{Fixed Point Theory Appl.},  {\bf 1}(2013), 1--18.

\bibitem{BS}
{\sc M. Berzig,  B. Samet},{\em An extension of coupled fixed points concept in higher dimension and applications},{Comput. Math. Appl.} {\bf 63}(2012), 1319--1334.

\bibitem{Be}
{\sc M. Berzig}, {\em Coincidence and common fixed point results on metric spaces endowed with an arbitrary binary relation and applications}, {J. Fixed Point Theory Appl.} {\bf 12} (2012), 221--238.

\bibitem{BW}
{\sc D.W. Boyd,  J.S.W. Wong}, {\em On nonlinear contractions}, {Proc. Amer. Math. Soc.} {\bf 20} (1969), {458--464}.

\bibitem{C}
{\sc Lj.B. \'Ciri\'c }, {\em Generalized contractions and fixed-point theorems}, {Publ. Inst. Math.(Beograd)(NS)} {\bf 12} (1971), 19--26.

\bibitem{DC},
{\sc P.N. Dutta,   B.S. Choudhury}, {\em A generalisation of contraction principle in metric spaces}, {Fixed Point Theory Appl.}  {\bf 2008} (2008), 1--8.


\bibitem{HR}
{\sc G.E. Hardy, T.D. Rogers},{\em A generalization of a fixed point theorem of Reich}, {Canad. Math. Bull} {\bf 16} (1973),{201--206}.

\bibitem{K}
{\sc E. Karap{\i}nar}, {\em Fixed point theory for cyclic weak $\phi$-contraction}, {Appl. Math. Lett.} {\bf 24} (2011),822--825.

\bibitem{KSS}
{\sc M.S. Khan,  M. Swaleh,  S. Sessa}, {\em Fixed point theorems by altering distances between the points}, {Bull. Aust. Math. Soc.} {\bf 30} (1984),1--9.

\bibitem{MK}
{\sc A. Meir, E. Keeler}, {\em A theorem on contraction mappings}, {J. Math. Anal. Appl.}, {\bf 28} (1969), {326--329}.


\bibitem{N}
{\sc Jr., S.B. Nadler }, {\em Multi-valued contraction mappings}, {Pacific J. Math} {\bf 30} (1969), 475--488.


\bibitem{R}
{\sc B.E. Rhoades}, {\em Some theorems on weakly contractive maps}, {Nonlinear Anal.}, {\bf 47} (2001), {2683--2693}.


\bibitem{RR}
{\sc A.C.M.Ran,  M.C.B.Reurings}, {\em A fixed point theorem in partially ordered sets and some applications to matrix equations}, {Proc. Amer. Math. Soc.}
{\bf 132} (2004), {1435--1443}.


 \end{thebibliography}
\end{document}